\documentclass[12pt,reqno]{amsart}

\usepackage{amsmath,amsfonts,amssymb,amsthm,bm,color}

\usepackage{geometry}
\geometry{top=3cm, bottom=3cm, outer=3cm, inner=3cm}

\usepackage{hyperref}
\hypersetup{colorlinks,linkcolor=[rgb]{0,0,1},citecolor=[rgb]{1,0,0}}


\usepackage[all,cmtip]{xy}

\usepackage[utf8]{inputenc}

\usepackage{multirow}

\usepackage{mathrsfs}

\setlength{\parindent}{0in}
\setlength{\parskip}{5pt}


\newlength{\wideitemsep}
\setlength{\wideitemsep}{3pt}
\let\olditem\item
\renewcommand{\item}{\setlength{\itemsep}{\wideitemsep}\olditem}

\renewcommand{\geq}{\geqslant}

\newcommand{\lra}{\longrightarrow}

\newcommand{\into}{\hookrightarrow}


\newtheorem{theorem}{Theorem}[section]
\newtheorem{lemma}[theorem]{Lemma}
\newtheorem{proposition}[theorem]{Proposition}
\newtheorem{corollary}[theorem]{Corollary}
\newtheorem{conjecture}[theorem]{Conjecture}

\newtheorem*{conjecture*}{Conjecture}

\theoremstyle{definition}
\newtheorem{definition}[theorem]{Definition}

\theoremstyle{remark}
\newtheorem{remark}[theorem]{Remark}


\newcommand{\cA}{\mathscr{A}}
\newcommand{\cB}{\mathscr{B}}
\newcommand{\cC}{\mathscr{C}}
\newcommand{\cD}{\mathscr{D}}

\newcommand{\cF}{\mathcal{F}}

\newcommand{\cI}{\mathcal{I}}

\newcommand{\cM}{\mathcal{M}}

\newcommand{\cT}{\mathscr{T}}

\newcommand{\kk}{\ensuremath{\Bbbk}} 

\newcommand{\CC}{\ensuremath{\mathbb{C}}}

\newcommand{\QQ}{\ensuremath{\mathbb{Q}}}

\newcommand{\ZZ}{\ensuremath{\mathbb{Z}}} 

\newcommand{\vv}{{\bm{\mathrm v}}}  
\newcommand{\ww}{{\bm{\mathrm w}}}
\newcommand{\uu}{{\bm{\mathrm u}}}  



\DeclareMathOperator{\Amp}{Amp}

\DeclareMathOperator{\ch}{ch}

\DeclareMathOperator{\Coh}{Coh}

\DeclareMathOperator{\cone}{Cone}

\DeclareMathOperator{\Ext}{Ext}

\DeclareMathOperator{\Hom}{Hom}


\DeclareMathOperator{\NS}{NS}
\DeclareMathOperator{\Ob}{Ob}

\DeclareMathOperator{\RHom}{RHom}
\DeclareMathOperator{\rk}{rk}

\DeclareMathOperator{\Stab}{Stab}

\DeclareMathOperator{\td}{td}

\DeclareMathOperator{\dgCat}{\mathbf{dgCat}}
\DeclareMathOperator{\Hqe}{\mathbf{Hqe}}

\newcommand{\hdot}{{\:\raisebox{3pt}{\text{\circle*{1.5}}}}}

\begin{document}

\title{Formality conjecture for K3 surfaces}

\author{Nero Budur}
\address{KU Leuven\\
Celestijnenlaan 200B\\
B-3001 Leuven\\
Belgium}
\email{nero.budur@kuleuven.be}
\urladdr{https://www.kuleuven.be/wis/algebra/budur}

\author{Ziyu Zhang}
\address{Institut f\"ur algebraische Geometrie\\
Leibniz Universit\"at Hannover\\
Welfengarten 1\\
30167 Hannover\\
Germany}
\email{zhangzy@math.uni-hannover.de}
\urladdr{https://ziyuzhang.github.io}

\keywords{K3 surfaces, stable sheaves, formality, DG algebras, DG enhancements}

\subjclass[2010]{Primary: 14D20; Secondary: 16E45, 14B05, 14J28}

\begin{abstract}
	We give a proof of the formality conjecture of Kaledin and Lehn: on a complex projective K3 surface, the DG algebra $\RHom^\hdot(F,F)$ is formal for any sheaf $F$ polystable with respect to an ample line bundle. Our main tool is the uniqueness of the DG enhancement of the bounded derived category of coherent sheaves. We also extend the formality result to derived objects that are polystable with respect to a generic Bridgeland stability condition.
\end{abstract}

\maketitle

\section{Introduction}

A differential graded algebra is said to be {formal} if it is quasi-isomorphic to its homology algebra.
Formality is a central concept with roots in real homotopy theory \cite{DGMS75}, deformation quantization \cite{Kon03}, and deformation theory \cite{GM88}. The presence of formality is in general difficult to prove and has important consequences: in topology, formality implies vanishing of all Massey products; in deformation quantization, it implies that every Poisson manifold has a deformation quantization; in deformation theory, it implies that the underlying derived, or non-commutatively thickened, moduli spaces are locally defined by cup products, leading to a linear-algebraic (quiver) interpretation.  The following was conjectured by Kaledin and Lehn in \cite{KL07}: 

\begin{conjecture}
	\label{conj:conjB} 
	For a projective K3 surface $X$ with a generic polarization $H$, let $F$ be a $H$-polystable coherent sheaf on $X$. Then the DG algebra $\RHom^\hdot(F,F)$ is formal.
\end{conjecture}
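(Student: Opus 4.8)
The plan is to deduce formality from the local structure of the moduli space of sheaves, using the uniqueness of DG enhancements to turn scheme-theoretic information about that local structure into information about the DG algebra.

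\emph{Reductions and set-up.} Write $F=\bigoplus_i F_i^{\oplus m_i}$ with the $F_i$ pairwise non-isomorphic stable sheaves of the same reduced Hilbert polynomial. An $A_\infty$-quasi-isomorphism $\RHom^\hdot(\bigoplus_i F_i,\bigoplus_i F_i)\simeq\Ext^\hdot(\bigoplus_i F_i,\bigoplus_i F_i)$ induces one between the DG algebras obtained by inserting the multiplicity spaces, so it suffices to treat $F=\bigoplus_i F_i$. By Serre duality $\Ext^\hdot_X(F,F)$ is concentrated in degrees $0,1,2$, with $\Ext^0$ semisimple and $\Ext^2\cong(\Ext^0)^\vee$, so $\RHom^\hdot_X(F,F)$ is a $2$-Calabi--Yau DG algebra; by the structure theory of such algebras its minimal $A_\infty$-model is encoded by the $\Ext$-quiver $Q$ of $\{F_i\}$ (one vertex per $F_i$, with $\dim\Ext^1_X(F_i,F_j)$ arrows $i\to j$, self-dual by Serre duality) together with a cyclic potential $W=W_2+W_{\geq3}$ whose quadratic part $W_2$ records the product $\Ext^1\otimes\Ext^1\to\Ext^2$. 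Formality of $\RHom^\hdot_X(F,F)$ is equivalent to $W$ being right-equivalent to $W_2$; and the pair $(Q,W_2)$ --- depending only on $Q$, this is the preprojective-type (Calabi--Yau completion) DG algebra --- is itself formal, with cohomology again $\Ext^\hdot_X(F,F)$.

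\emph{Passage to the moduli space.} Put $G=\prod_i\GL_{m_i}$, let $v=v(F)$, and let $\cM=M_H(v)$ be the projective moduli space of $H$-semistable sheaves with Mukai vector $v$, so that $[F]\in\cM$ is the point represented by $F$. Using the (possibly $\alpha$-twisted) universal family on $X\times\cM$ and the calculus of Fourier--Mukai kernels, I would identify the formal neighbourhood of $[F]$ in $\cM$ with the $G$-quotient of the critical germ of $W$ on the representation space of $Q$. On the other hand, the known description of the local structure of moduli of sheaves on K3 surfaces (Nakajima quiver varieties, after Lehn--Sorger, Arbarello--Sacc\`a, and subsequent refinements) identifies this same germ with that of the origin in $\mu^{-1}(0)/\!\!/G$, where $\mu$ is the moment map; its defining equations are the \emph{quadratic} components of $\mu$, i.e. this germ is the one governed by $W_2$.

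\emph{Uniqueness of the DG enhancement.} At this stage one knows that the two commutative germs --- the one carrying $W$ and the one carrying $W_2$ --- are isomorphic as schemes, indeed as quotient stacks; but this does not by itself control the DG algebra, since forgetting to cohomology loses the higher products. The decisive step is to lift the scheme isomorphism to the DG level: the bounded derived category of coherent sheaves on each germ --- equivalently, on the relevant quiver variety, which is singular precisely when $v$ is imprimitive --- admits a \emph{unique} DG enhancement by Lunts--Orlov, so the triangulated equivalence produced by the scheme isomorphism underlies a DG equivalence. Reading this back through the Fourier--Mukai picture forces $(Q,W)$ and $(Q,W_2)$ to define quasi-isomorphic DG algebras, i.e. $W\simeq W_2$; hence $\RHom^\hdot_X(F,F)$ is formal.

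\emph{Bridgeland stability, and the main obstacle.} The same argument goes through with $M_H(v)$ replaced by the moduli space $M_\sigma(v)$ of $\sigma$-semistable objects for a generic Bridgeland stability condition $\sigma$: by Bayer--Macr\`i this is again a projective symplectic variety with the same quiver-variety local structure at a polystable point and a unique DG enhancement, giving formality of $\RHom^\hdot(E,E)$ for every $\sigma$-polystable object $E\in D^b(X)$. The hard part is the third step: making the passage from ``the local moduli germ is a quiver variety'' to ``$W\simeq W_2$'' rigorous via uniqueness of enhancements. This needs a careful set-up of the universal family and the Fourier--Mukai kernels --- including the $\alpha$-twisted case, where $\cM$ carries no universal sheaf --- and control of the singular situation for imprimitive $v$; by comparison the $A_\infty$- and quiver-algebra bookkeeping and the formality of the quadratic model are routine.
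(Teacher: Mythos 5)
There is a genuine gap at your ``decisive step,'' and it is the step that carries all the weight. An isomorphism of commutative germs $(\cM,[F])\cong(\mu^{-1}(0)/\!\!/ G,0)$ only identifies the critical locus of the Kuranishi map with the zero locus of its quadratic part; it records nothing about the higher $A_\infty$-products on $\Ext^\hdot(F,F)$, and the implication you need --- quadraticity of the germ forces $W\sim W_2$ --- is precisely the converse of what is true (formality implies local quadraticity; the paper lists the quadraticity results of \cite{Yos16} and \cite{AS18} as \emph{consequences} of the theorem, not inputs). Your attempt to bridge this with Lunts--Orlov does not work: \cite[Theorem 2.14]{LO10} asserts uniqueness of the DG enhancement of $D^b(\Coh(Y))$ for a fixed projective $Y$, so it upgrades a triangulated equivalence $D^b(\Coh(X))\to D^b(\Coh(Y))$ to a quasi-functor and hence preserves the quasi-isomorphism class of $\RHom^\hdot(F,F)$ for $F$ an object \emph{of that category}. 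But $F$ is not an object of the derived category of the moduli germ or of the quiver variety; an isomorphism of those germs gives a (tautologically DG) equivalence of \emph{their} sheaf categories, which carries no information about the potential $W$, since non-right-equivalent potentials can have isomorphic critical loci, let alone isomorphic GIT quotients of them. There is also a circularity risk: the local structure theorems you invoke are themselves established, in the generality you need, via formality-type statements or via the same Fourier--Mukai reductions the paper uses.

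For comparison, the paper applies Lunts--Orlov in the opposite direction: applied to $D^b(\Coh(X))$ itself, it shows that any derived equivalence preserves formality of $\RHom^\hdot(F,F)$ (Proposition~\ref{prop:intro}). One then composes $-\otimes H^m$ with Yoshioka's Fourier--Mukai transform (Theorem~\ref{thm:preserve}) to turn $F$ into an $H$-polystable sheaf all of whose stable summands are $\mu_H$-stable of rank at least $2$, where formality is already known by \cite{Zha12} via the Kaledin--Lehn method; no local structure of the moduli space enters. If you want to pursue a moduli-theoretic route, you would need the \emph{derived} or non-commutative germ (in the sense of \cite{Toe17,Tod17a}) rather than the classical one, together with a statement recovering the cyclic $A_\infty$-structure from it --- a genuinely different and harder input than uniqueness of enhancements.
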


In this article we prove two generalizations of this conjecture:

\begin{theorem}
	\label{thm:intro}
	For a projective K3 surface $X$ with an arbitrary polarization $H$, let $F$ be a $H$-polystable coherent sheaf on $X$. Then the DG algebra $\RHom^\hdot(F,F)$ is formal.
\end{theorem}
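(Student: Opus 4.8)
\emph{Proof strategy.} The plan is to transport the computation, using the uniqueness of DG enhancements, to a situation in which the endomorphism DG algebra is visibly formal because it is the one of a skyscraper sheaf on a smooth variety. Write $F\cong\bigoplus_{i=1}^{k}F_i^{\oplus n_i}$ with the $F_i$ pairwise non-isomorphic and $H$-stable of a common reduced Hilbert polynomial. Then $\RHom^\hdot(F,F)$ is assembled from the complexes $\RHom^\hdot(F_i,F_j)$ with the evident block structure, hence is Morita equivalent to $\RHom^\hdot\big(\bigoplus_iF_i,\bigoplus_iF_i\big)$, and since formality is a Morita invariant it suffices to prove that the DG category $\mathcal A$ on the objects $F_1,\dots,F_k$, viewed inside the DG enhancement of $D^b(\Coh X)$ (unique up to quasi-equivalence by Lunts--Orlov), is formal. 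The uniqueness theorem enters twice here: it makes $\mathcal A$, up to quasi-equivalence, independent of the chosen enhancement, and it promotes any exact equivalence $D^b(\Coh X)\xrightarrow{\,\sim\,}D^b(\Coh Y,\alpha)$ — in particular a Fourier--Mukai transform associated to a moduli space of sheaves — to a quasi-equivalence of enhancements, so that formality of $\mathcal A$ may be tested after such an equivalence.

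The model case is the one in which, for a stability condition (or polarization) off the walls of the Mukai vector $v_0$, all the $F_i$ are stable of one and the same \emph{primitive} class $v_0$. Then $M:=M(v_0)$ is a smooth projective hyperk\"ahler variety, the $[F_i]$ are pairwise distinct points of $M$, and a twisted universal family yields a Fourier--Mukai equivalence $\Phi\colon D^b(\Coh X)\xrightarrow{\,\sim\,}D^b(\Coh M,\alpha)$ with $\Phi(F_i)\cong k(p_i)$, the twisted skyscraper at $p_i:=[F_i]$. As the $p_i$ are distinct, $\RHom^\hdot_M\big(\bigoplus_ik(p_i),\bigoplus_ik(p_i)\big)\simeq\prod_i\RHom^\hdot_M(k(p_i),k(p_i))$, and each factor is formal: the $\mathbb{G}_m$-action rescaling local coordinates at a smooth point endows the minimal model $\Ext^\hdot_M(k(p_i),k(p_i))\cong\bigwedge^\hdot T_{p_i}M$ with an internal weight equal to its cohomological degree, so an $A_\infty$-product $m_\ell$ with $\ell\geq3$, which shifts cohomological degree by $2-\ell$, cannot preserve this weight and must vanish. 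Transporting back along $\Phi$ — a Fourier--Mukai kernel, hence automatically a DG functor, and in any case covered by uniqueness of enhancements — gives formality of $\mathcal A$. Carried out for a generic Bridgeland stability condition instead of a polarization, the same argument yields the generalization promised in the introduction.

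The crux, and the step I expect to be the main obstacle, is the reduction of the general statement to this model case. For a \emph{generic} polarization one checks first that $H$-stable summands sharing a reduced Hilbert polynomial necessarily have \emph{proportional} Mukai vectors — the polarizations for which two non-proportional classes acquire equal $H$-slope form a union of hyperplanes in the ample cone — which puts one essentially in the model case once the multiplicities and a possible non-primitivity of $v_0$ are addressed. Two new phenomena remain: a non-primitive $v_0$, where $M(v_0)$ is singular — the O'Grady-type situation — so that no global Fourier--Mukai equivalence is available; and an \emph{arbitrary} $H$, which may sit on a wall, so that every generic perturbation separates the $F_i$ in slope. For the first, the $[F_i]$ still lie in the \emph{smooth} stable locus $M(v_0)^{\mathrm s}$, which carries a twisted universal family and hence a fully faithful Fourier--Mukai functor into $D^b(\Coh M(v_0)^{\mathrm s},\alpha)$, reducing $\RHom^\hdot(F_i,F_j)$ to the formal skyscraper computation on the smooth — though non-compact — variety $M(v_0)^{\mathrm s}$. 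For the second, each $F_i$ remains stable under a small generic perturbation of the stability condition and so lies in a smooth moduli space; one must then organize the individual Fourier--Mukai transforms, together with uniqueness of DG enhancements, into a single quasi-equivalence of $\mathcal A$ with a formal DG category. In every case the technical heart is to use the embedding of $\mathcal A$ in the unique enhancement of $D^b(\Coh X)$ to force the higher $A_\infty$-products $m_\ell$, $\ell\geq3$, on $\Ext^\hdot(F,F)$ to vanish, exactly as they do for skyscraper sheaves; granting that, the Morita reduction, the skyscraper computation, and the Fourier--Mukai functoriality are routine.
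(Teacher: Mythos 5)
Your overall framework---transporting formality along derived equivalences via the Lunts--Orlov uniqueness of DG enhancements---is exactly the mechanism the paper uses (its Proposition \ref{prop:intro}). But the target of your reduction, the ``model case'' of skyscraper sheaves on the moduli space $M(v_0)$, does not exist except in a very degenerate situation, and this is a fatal gap rather than a technical hurdle. If $v_0$ is primitive with $v_0^2>0$, then $\dim M(v_0)=v_0^2+2>2$ and there is \emph{no} equivalence $D^b(\Coh X)\simeq D^b(\Coh M(v_0),\alpha)$ sending $F_i$ to $k(p_i)$; indeed the universal-family functor $D^b(M(v_0),\alpha)\to D^b(X)$, $k(p)\mapsto F_p$, is not even fully faithful, since $\Ext^\hdot_X(F_p,F_p)\cong\CC\oplus\CC^{v_0^2+2}[-1]\oplus\CC[-2]$ whereas $\Ext^\hdot(k(p),k(p))\cong\bigwedge^\hdot T_pM$ has total dimension $2^{v_0^2+2}$; these are not isomorphic graded algebras, so no amount of enhancement uniqueness can identify their $A_\infty$-structures. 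Worse, for $k\geq 2$ the off-diagonal blocks $\Ext^1(F_i,F_j)$ are generically nonzero (of dimension $v_i\cdot v_j$) while $\Ext^\hdot(k(p_i),k(p_j))=0$ for $p_i\neq p_j$, so the polystable case---the only genuinely hard case---cannot be modelled by a direct sum of skyscrapers at distinct points under any fully faithful functor. The same objection defeats your proposed fix via the stable locus $M(v_0)^{\mathrm s}$. Your skyscraper argument is only correct when $v_0^2=0$ and every summand has the primitive isotropic class $v_0$, in which case $M(v_0)$ is again a K3 surface; and your reduction of an arbitrary polystable sheaf to summands of a \emph{common} class $v_0$ is also not available, since for non-generic $H$ the stable summands need not have proportional Mukai vectors, and even for generic $H$ they may be distinct multiples of $v_0$.

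The paper avoids all of this by reducing not to skyscrapers but to a previously established case of the conjecture. It tensors $F$ by a large power of $H$ and applies Yoshioka's Fourier--Mukai autoequivalence $\Phi$ of $D^b(\Coh X)$ itself (kernel $\cI_\Delta$), which sends $H$-polystable sheaves to $H$-polystable sheaves all of whose stable summands are $\mu_H$-stable of rank at least $2$; formality in that regime is Proposition \ref{prop:known}(ii), proved in \cite{Zha12} by the Kaledin--Lehn method and valid for arbitrary $H$ by \cite[Remark 3.4(2)]{AS18}. Proposition \ref{prop:intro} then transports formality back. If you want to salvage your approach, you must replace the skyscraper model by some input that actually establishes formality for at least one representative in each derived-equivalence orbit; as written, your proposal proves formality from scratch only for sums of stable sheaves with a common primitive isotropic Mukai vector, and defers precisely the cases that constitute the theorem.
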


\begin{theorem}
	\label{cor:intro}
	Let $X$ be a projective K3 surface, $\vv$ a Mukai vector, and $\sigma \in \Stab^\dagger(X)$ a Bridgeland stability condition that is generic with respect to $\vv$. For any $\sigma$-polystable derived object $F$ of Mukai vector $\vv$, the DG algebra $\RHom^\hdot(F,F)$ is formal.
\end{theorem}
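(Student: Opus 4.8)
The plan is to deduce Theorem~\ref{cor:intro} from Theorem~\ref{thm:intro} by transporting the problem along a Fourier--Mukai equivalence into a Gieseker chamber, using that such equivalences respect the DG-algebra structure on $\RHom^\hdot$ and carry polystability to polystability.

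\emph{First}, I would record that Fourier--Mukai equivalences preserve formality. If $\Phi\colon D^b(X)\to D^b(Y)$ is an exact equivalence between the bounded derived categories of two (possibly twisted) projective K3 surfaces, then by Orlov's representability theorem $\Phi$ is of Fourier--Mukai type; equivalently, by the uniqueness of the DG enhancement of $D^b(X)$ that is the main tool of this paper, $\Phi$ lifts to a quasi-equivalence of the canonical DG enhancements. Hence for any object $F$ there is a quasi-isomorphism of DG algebras $\RHom^\hdot_X(F,F)\simeq\RHom^\hdot_Y(\Phi F,\Phi F)$, so the left-hand side is formal if and only if the right-hand side is; the same holds with $F[n]$ in place of $F$, since a shift does not change the endomorphism DG algebra.

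\emph{Then}, given a generic $\sigma\in\Stab^\dagger(X)$ and a $\sigma$-polystable $F$ of Mukai vector $\vv$, write $F=\bigoplus_i G_i\otimes W_i$ with the $G_i$ pairwise non-isomorphic $\sigma$-stable objects of one common $\sigma$-phase and $W_i$ finite-dimensional multiplicity spaces. Using the connectedness of $\Stab^\dagger(X)$, the $\widetilde{\GL}^+_2(\RR)$-action, and the action of the Fourier--Mukai autoequivalence group on $\Stab^\dagger(X)$, together with the wall-and-chamber decomposition attached to $\vv$, I would produce a Fourier--Mukai equivalence $\Phi\colon D^b(X)\to D^b(Y)$ onto a (possibly twisted) projective K3 surface $Y$ such that $\Phi(\sigma)$ lies in the $\widetilde{\GL}^+_2(\RR)$-orbit of a geometric stability condition sitting in a Gieseker chamber for the image Mukai vector; this is where the structural results of Bridgeland, of Bayer--Macr\`i, and of Minamide--Yanagida--Yoshioka on Bridgeland-stable objects on K3 surfaces enter, and it is the reason for restricting to the distinguished component $\Stab^\dagger(X)$. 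For such a $\Phi$, every $\sigma$-semistable object of the $\sigma$-phase of $F$ is carried, after one common shift $[n]$, to a (twisted) Gieseker-semistable sheaf on $Y$, and $\sigma$-stable objects to Gieseker-stable sheaves; applying this to each $G_i$ shows that $\Phi(F)[n]=\bigoplus_i(\Phi(G_i)[n])\otimes W_i$ is a (twisted) Gieseker-polystable sheaf on $Y$.

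\emph{Finally}, Theorem~\ref{thm:intro} applied on $Y$ --- in the twisted case its twisted analogue, which holds by the same argument, since the ingredients (uniqueness of DG enhancements, the geometry and local structure of moduli of stable sheaves on K3 surfaces) are equally available for twisted K3 surfaces --- would give that $\RHom^\hdot_Y(\Phi(F)[n],\Phi(F)[n])$ is formal, and the first step would then yield formality of $\RHom^\hdot_X(F,F)$.

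\textbf{Main obstacle.} The crux is the middle step: ensuring that, after composing with Fourier--Mukai equivalences, an arbitrary generic stability condition in $\Stab^\dagger(X)$ can be driven into a Gieseker chamber, so that the $\sigma$-polystable object becomes a genuine (twisted) Gieseker-polystable sheaf. This rests on the global description of $\Stab^\dagger(X)$ and on the orbit structure of the autoequivalence group, and it is essentially the only place where Bridgeland-specific input --- as opposed to the formality machinery already developed for sheaves --- is needed. A secondary but unavoidable point is that the chamber reached this way is in general only twisted, so one must also check that the sheaf-level formality theorem extends to twisted K3 surfaces.
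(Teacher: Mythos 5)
Your overall strategy is the same as the paper's: use the invariance of the quasi-isomorphism class of $\RHom^\hdot(F,F)$ under derived equivalences (Proposition~\ref{prop:intro}), drive the generic stability condition $\sigma$ into a Gieseker chamber by composing with autoequivalences and wall-crossing, and then invoke Theorem~\ref{thm:intro}. The paper packages the middle step as Proposition~\ref{prop:derived}: there is an autoequivalence $\Theta$ of $D^b(\Coh(X))$ \emph{of the same untwisted $X$} inducing an isomorphism $\cM_{X,\sigma}(\vv)\cong\cM_{X,\omega}(\uu)$ preserving S-equivalence classes, for some generic ample $\omega$. For $\vv^2>0$ this is essentially \cite[Lemma 7.3]{BM14a} (cf.\ \cite[Remark 3.15]{MZ16}); for $\vv^2\le 0$ one first reduces to primitive $\vv$ with positive leading component and then crosses walls one at a time using explicit autoequivalences from \cite[Proposition 6.8]{BM14b} and \cite[Lemma 7.2(a)]{BM14a}.

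The genuine gap in your write-up is the point you yourself flag as ``secondary'': you land in a chamber that is in general only a \emph{twisted} Gieseker chamber on some K3 surface $Y$, and you then assert that the twisted analogue of Theorem~\ref{thm:intro} ``holds by the same argument.'' It does not, at least not without substantial extra work: the inputs to Theorem~\ref{thm:intro} --- Zhang's formality result (Proposition~\ref{prop:known}) for polystable sheaves whose stable summands all have rank $\ge 2$ or all have rank $1$, and Yoshioka's Theorem~\ref{thm:preserve} guaranteeing $\mu_H$-stable images of the stable summands --- are proved for untwisted sheaves, and the Lunts--Orlov uniqueness of DG enhancement underlying Proposition~\ref{prop:intro} is stated for $D^b(\Coh(X))$, not for twisted derived categories. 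The paper sidesteps all of this: in Proposition~\ref{prop:derived} the destination is always the \emph{untwisted} Gieseker chamber of $X$ itself, and twisted K3 categories enter only as an auxiliary tool in analyzing one type of wall in the isotropic case $\vv^2=0$ (via \cite[Section 3.1]{HMS08}), never as the place where the sheaf-level formality theorem is applied. To close your argument you would either need to prove the twisted extension of Theorem~\ref{thm:intro} (and of the enhancement-uniqueness input), or, more economically, replace your middle step by the untwisted statement of Proposition~\ref{prop:derived}.
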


Conjecture \ref{conj:conjB} was proved in  \cite{KL07} for $F = \cI_Z^{\oplus n}$, where $\cI_Z$ is the ideal sheaf of a $0$-dimensional closed subscheme. In \cite{Zha12}, the conjecture was proved in a few more  cases; see Proposition \ref{prop:known}. It was also pointed out in \cite{Zha12} that the technique from \cite{KL07} is no longer enough for tackling the remaining cases. In this article we explore the following new idea:

\begin{proposition}
	\label{prop:intro}
	For smooth projective varieties $X$ and $Y$, let
	\begin{equation*}
		\Phi: D^b(\Coh(X)) \longrightarrow D^b(\Coh(Y))
	\end{equation*}
	be a derived equivalence. Then for any object $F \in D^b(\Coh(X))$, the DG algebra $\RHom^\hdot(F,F)$ is formal if and only if $\RHom^\hdot(\Phi(F), \Phi(F))$ is formal.
\end{proposition}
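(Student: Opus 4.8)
The plan is to deduce the formality statement from the uniqueness of DG enhancements of $D^b(\Coh(X))$, which is the tool highlighted in the abstract. The key point is that $\RHom^\hdot(F,F)$ is not an intrinsic invariant of the triangulated category $D^b(\Coh(X))$ — a triangulated category has no derived Hom-complexes, only Hom-groups — but it \emph{is} an invariant of any DG enhancement of that category. So the strategy is: (1) recall that $D^b(\Coh(X))$ admits a DG enhancement $\cA_X$ (e.g. the DG category of bounded-below complexes of injectives with bounded coherent cohomology, or $h$-injective/$h$-flat resolutions), and that for any object $F$ the DG algebra $\RHom^\hdot(F,F)$ is quasi-isomorphic to the endomorphism DG algebra $\End_{\cA_X}(\tilde F)$ of a lift $\tilde F$ of $F$; (2) observe that formality of a DG algebra is invariant under quasi-isomorphism, hence depends only on the quasi-equivalence class of the enhancement together with the isomorphism class of the object; (3) invoke the theorem of Lunts–Orlov (and its refinements by Canonaco–Stellari) that $D^b(\Coh(X))$ has a \emph{unique} DG enhancement up to quasi-equivalence, for $X$ a smooth projective variety.

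**Next I would** combine these ingredients with the given derived equivalence $\Phi$. Transport the enhancement $\cA_Y$ of $D^b(\Coh(Y))$ along $\Phi^{-1}$ to obtain a second enhancement $\Phi^{-1}_*\cA_Y$ of $D^b(\Coh(X))$; by uniqueness of enhancements this is quasi-equivalent to $\cA_X$. Under such a quasi-equivalence, the object $F$ (viewed in the first enhancement) corresponds to an object of the second enhancement that lifts $\Phi(F)$, and — crucially — endomorphism DG algebras are carried to quasi-isomorphic DG algebras by a quasi-equivalence of DG categories applied to corresponding objects. Therefore $\End_{\cA_X}(\tilde F)$ and $\End_{\cA_Y}(\widetilde{\Phi(F)})$ are quasi-isomorphic DG algebras, so one is formal if and only if the other is; unwinding the identifications from step (1) gives the claim for $\RHom^\hdot(F,F)$ and $\RHom^\hdot(\Phi(F),\Phi(F))$. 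The argument is symmetric in $X$ and $Y$ via $\Phi^{-1}$, so an "if and only if" is automatic.

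**The main obstacle** — really the only subtle point — is matching objects correctly across the chain of functors: the uniqueness theorem produces \emph{a} quasi-equivalence $\cA_X \to \Phi^{-1}_*\cA_Y$, but one must know that this quasi-equivalence induces a functor on homotopy categories that is \emph{isomorphic to} (a functor isomorphic to) $\Phi$, so that the lift of $F$ is sent to a lift of $\Phi(F)$ and not to a lift of some other object with isomorphic endomorphism algebra. The cleanest way around this is to use the stronger statement that $D^b(\Coh(X))$ has a unique enhancement compatible with a \emph{fixed} equivalence to the homotopy category — or, equivalently, to note that for smooth projective $X$ every autoequivalence of $D^b(\Coh(X))$ lifts to the enhancement (again by Lunts–Orlov / Canonaco–Stellari), so one may post-compose the quasi-equivalence with a lift of the appropriate autoequivalence to arrange that it induces exactly $\Phi$ on homotopy categories. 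Once this bookkeeping is handled, the rest is formal nonsense: formality is a quasi-isomorphism invariant, and quasi-equivalences of DG categories preserve endomorphism DG algebras up to quasi-isomorphism.
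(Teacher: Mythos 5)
Your proposal follows essentially the same route as the paper: formality is read off from the endomorphism DG algebra of a lift of $F$ in a DG enhancement, quasi-equivalences preserve endomorphism DG algebras up to quasi-isomorphism, and the strong uniqueness of enhancements for $D^b(\Coh(X))$ (Lunts--Orlov, Theorem 2.14 of \cite{LO10}) lets you compare the enhancement of $D^b(\Coh(X))$ with the one transported from $D^b(\Coh(Y))$ along $\Phi$. You also correctly isolate the only delicate point, namely matching objects across the comparison quasi-functor, and strong uniqueness is indeed the right tool for pinning down the isomorphism class of the image object.

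One residual step is left to ``formal nonsense'' in your write-up but actually requires care, and it is precisely the step the paper's proof of Proposition \ref{prop:RHom} is engineered to avoid. Strong uniqueness only guarantees that the quasi-equivalence sends your lift $\tilde F$ to an object \emph{isomorphic in the homotopy category} to a chosen lift of $\Phi(F)$, not equal to it; so at the end you must know that two homotopy-equivalent objects of a DG category have quasi-isomorphic endomorphism DG algebras (as DG algebras, via an honest zig-zag, not merely as objects of $\Hqe$). This is true but not a one-line formality. The paper sidesteps it entirely by passing to the full DG subcategories $\cC_i \subset \cB_i$ obtained by deleting every object isomorphic to $T$ except the chosen lift $B_i$; these are still pre-triangulated enhancements, and any quasi-equivalence between them is then forced to match the chosen lifts \emph{on the nose}, so the zig-zag of DG algebra quasi-isomorphisms comes directly from the roof $\cC_1 \leftarrow \cC_0 \rightarrow \cC_2$. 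If you want your version to be complete, either supply the lemma on homotopy-equivalent objects or adopt this full-subcategory reduction.
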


The main ingredient in the proof of  is a theorem of Lunts and Orlov \cite[Theorem 2.14]{LO10} stating that for a smooth projective variety $X$, $D^b(\Coh(X))$ admits a strongly unique DG enhancement. The proof of Theorem \ref{thm:intro} follows  easily from Proposition \ref{prop:intro}: given any polystable sheaf $F$, there exists some Fourier-Mukai transform $\Phi$ by \cite[Theorem 1.7]{Yos09}, such that $\Phi(F)$ is a polystable sheaf and satisfies the assumption required in \cite{Zha12}. Similarly, Theorem \ref{cor:intro} is reduced to Theorem \ref{thm:intro} by applying autoequivalences of $D^b(\Coh(X))$ constructed using \cite{Bri08,Yos09,BM14a,BM14b}. 

For the relation between Conjecture \ref{conj:conjB} and  non-commutative deformation theory see for example \cite{Tod17a}; for the relation to derived deformation theory see \cite{Toe17}. Formality implies that all non-trivial analogs of the higher Massey products on $\Ext^\hdot(F,F)$ are zero. Thus our  results imply immediately the local quadraticity of the Kuranishi spaces of polystable sheaves on K3 surfaces obtained by \cite[Corollary 0.6]{Yos16}, and in a particular case by \cite[Theorem 3.7]{AS18}. Our approach of reducing everything to \cite{Zha12} is inspired by their approaches.

Our results have several other immediate implications. We can recover the main theorem of \cite[Theorem 1.1]{AS18} on symplectic resolutions of moduli spaces $\cM_{X,H}(\vv)$ of $H$-semistable sheaves on K3 surfaces via variations of GIT quotients of quiver varieties, by combining Theorem \ref{thm:intro} and  \cite[Theorem 1.3]{Tod17b}, as explained in \cite{Tod17b}. We can also obtain that $\cM_{X,H}(\vv)$ has symplectic singularities for arbitrary polarizations $H$, by combining Theorem \ref{thm:intro} and \cite[Proposition 1.2]{BS16}, which generalizes slightly \cite[Theorem 6.2]{KLS06}. Moreover, our results and the recent \cite{B-rat} imply immediately that the Kuranishi spaces of polystable sheaves have rational singularities, see \cite[Remarks 3.5 and 4.5]{B-rat}.

It is worth noting that Theorem \ref{thm:intro} fails if $F$ is not polystable. In \cite{LU18} several families of K3 surfaces $X$ are exhibited, on which the formality of $\RHom^\hdot(F,F)$ fails for a split generator $F$ of $D^b(\Coh(X))$.

Our results should have a symplectic counterpart. Kontsevich's Homological Mirror Symmetry Conjecture says that $D^b(\Coh(X))$ is equivalent to a triangulated category constructed from the symplectic geometry of the mirror of a Calabi-Yau manifold $X$. Our Corollary \ref{cor:qisom} below guarantees that formality would be mirrored. However, before any mirror conclusions can be drawn, a technical hurdle must be passed: we use $\CC$ coefficients, whereas HMS uses Novikov rings; see \cite{Sei}.
 
Throughout the article, the ground field is $\CC$, except in \S \ref{sec:category} and in Proposition \ref{prop:intro} where we can allow an arbitrary ground field $\kk$. In \S \ref{sec:category} we prove Proposition \ref{prop:intro}, while the proofs of Theorems \ref{thm:intro} and \ref{cor:intro} will be given in \S \ref{sec:geometry}. 

At the request of the referees, we have removed in this version some material known to experts. The considerably more detailed earlier version \cite{BZv3} of this article remains available on arXiv.

{\it Acknowledgements.} We thank T. Bridgeland for pointing to us the relevance of \cite{LO10} to the formality conjecture. We also thank E. Arbarello, A. Bayer, A. King, A. Krug, Y. Lekili, G. Sacc\`a and K. Yoshioka for their help with questions and for inspiring discussions. The second named author thanks the organizers of the \emph{School on Deformation Theory (University of Turin, July 2017)} for the stimulating atmosphere and the excellent lectures related to this topic. The first named author was partly supported by KU Leuven OT, FWO, and Methusalem grants.

\section{Formality via Uniqueness of DG Enhancement}\label{sec:category}

\subsection{Generalities on DG categories}\label{sec:DGgen}

We collect some classical concepts following mainly \cite{LO10}. We  work over a fixed field $\kk$. All categories are assumed to be small and $\kk$-linear.

\begin{definition}
A \emph{DG category} is a $\kk$-linear category $\cA$ whose morphism spaces $\Hom(A_1,A_2)$ are DG $\kk$-modules (aka complexes of $\kk$-vector spaces), such that
\begin{equation}
\label{eqn:comp}
\Hom_\cA(A_1,A_2) \otimes \Hom_\cA(A_2,A_3) \lra \Hom_\cA(A_1,A_3)
\end{equation}
are morphisms of DG $\kk$-modules for any objects $A_1, A_2, A_3 \in \Ob(\cA)$. Moreover, for any $A \in \Ob(\cA)$, there is an identity morphism $1_A \in \Hom_\cA(A,A)$ which is closed of degree $0$ and compatible with the composition.
\end{definition}

\begin{remark}
	The definition implies that the graded Leibniz rule holds and $\Hom_\cA(A,A)$ is a DG algebra for any $A \in \Ob(\cA)$.
\end{remark}

\begin{definition}
The \emph{homotopy category} $H^0(\cA)$ of a DG category $\cA$ is a $\kk$-linear category with the same objects as in $\cA$ and morphism spaces
$$ \Hom_{H^0(\cA)}(A_1, A_2) = H^0(\Hom_\cA(A_1,A_2)) $$
for any $A_1, A_2 \in \Ob(\cA)$.
\end{definition}

\begin{definition}
A \emph{DG functor} $\cF: \cA \to \cB$ between two DG categories is given by a map of sets
$$ \cF: \Ob(\cA) \to \Ob(\cB) $$
and morphisms of DG $\kk$-modules
$$ \cF(A_1,A_2): \Hom_\cA(A_1,A_2) \lra \Hom_\cB(\cF(A_1), \cF(A_2)) $$
for any $A_1, A_2 \in \Ob(\cA)$, compatible with compositions \eqref{eqn:comp} and units.
\end{definition}

\begin{definition}
A DG functor $\cF: \cA \to \cB$ is called a \emph{quasi-equivalence} if $\cF(X,Y)$ is a quasi-isomorphism for any $X, Y \in \Ob(\cA)$ and the induced functor on the homotopy categories
$$ H^0(\cF): H^0(\cA) \lra H^0(\cB) $$
is an equivalence of categories.
\end{definition}

\begin{remark}
In fact, instead of requiring $H^0(\cF)$ to be an equivalence, it is sufficient to require it to be essentially surjective, as the full faithfulness is already encoded in the quasi-isomorphisms of morphism spaces. See e.g. \cite[Definition 2, \S 2.3]{Toe11}.
\end{remark}

We denote the category of small DG categories with DG functors as morphisms by $\dgCat$, and its localization with respect to quasi-equivalences by $\Hqe$. It was proven in \cite{Tab05} that $\dgCat$ has the structure of a model category, with quasi-equivalences being the weak equivalences in the model structure. Then $\Hqe$ is the homotopy category of this model category. One special property of this model structure on $\dgCat$ is that every small DG category is a fibrant object.

\begin{definition}
	A morphism between two DG categories in $\Hqe$ is called a \emph{quasi-functor}. We say two DG categories are \emph{quasi-equivalent} if they are isomorphic in $\Hqe$.
\end{definition}

By this definition, two quasi-equivalent DG categories can be connected by a zig-zag chain of DG functors with alternative arrow directions. In fact, one has the following simpler presentation for a quasi-functor (see \cite[p.858]{LO10}; we supply a proof for the sake of completeness):

\begin{lemma}
	\label{lem:oneroof}
	Let $\cA$ and $\cB$ be DG categories. Any quasi-functor from $\cA$ to $\cB$ can be represented by the diagram
	\begin{equation}
		\label{eqn:factor}
		\cA \stackrel{f}{\longleftarrow} \cC \stackrel{g}{\longrightarrow} \cB
	\end{equation}
	where $\cC$ is a DG category, $f$ and $g$ are DG functors, with $f$ being a quasi-equivalence. Moreover, $\cA$ and $\cB$ are quasi-equivalent if and only if $g$ is also a quasi-equivalence.
\end{lemma}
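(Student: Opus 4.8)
The plan is to use the model category structure on $\dgCat$ in which every object is fibrant, together with the standard calculus of fractions for model categories.

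\textbf{Constructing the roof.} Since $\Hqe$ is the homotopy category of the model category $\dgCat$ with quasi-equivalences as weak equivalences, any morphism in $\Hqe$ from $\cA$ to $\cB$ can be represented by a \emph{left fraction}: a roof $\cA \stackrel{f}{\longleftarrow} \cC \stackrel{g}{\longrightarrow} \cB$ with $f$ a weak equivalence (quasi-equivalence). This is the general fact that in any model category, if the source is cofibrant (or, dually, using that all objects are fibrant, one can always replace and factor), morphisms in the homotopy category are computed by such roofs; concretely, factor a zig-zag representing the quasi-functor into a composite of fractions and compose the fractions using that weak equivalences satisfy a two-out-of-three property and admit the requisite pushouts/path objects. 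I would cite the standard reference (e.g. \cite{Tab05} for the model structure, and the general model-categorical fact, e.g. Gabriel--Zisman or Quillen) rather than reproving it. One may also take a shortcut: take a cofibrant replacement $Q\cA \to \cA$, lift the quasi-functor to an honest DG functor $Q\cA \to \cB'$ for a fibrant replacement $\cB'$ of $\cB$ (all objects are already fibrant, so $\cB' = \cB$), and then set $\cC = Q\cA$, $f$ the cofibrant replacement, $g$ the lift; this realizes the roof directly.

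\textbf{The equivalence statement.} For the ``moreover'' part: if $g$ is also a quasi-equivalence, then the roof represents an isomorphism in $\Hqe$ (both legs become isomorphisms in the localization), so $\cA$ and $\cB$ are quasi-equivalent. Conversely, if the quasi-functor $\varphi$ represented by \eqref{eqn:factor} is an isomorphism in $\Hqe$, I would argue that $g$ must be a quasi-equivalence. Since $f$ is a quasi-equivalence, $[f]$ is invertible in $\Hqe$, so $[g] = \varphi \circ [f]$ is invertible in $\Hqe$. Now I would invoke the fact that a DG functor between DG categories which becomes invertible in $\Hqe$ is automatically a quasi-equivalence: this is essentially the statement that quasi-equivalences are precisely the maps inverted in the localization, but one direction (a map inverted in the localization is a weak equivalence) requires the ``saturation'' of the model structure, which holds since $\dgCat$ is a model category. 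Concretely, $H^0(-)$ and the morphism-complex cohomology functors $H^n(\Hom(-,-))$ factor through $\Hqe$, so invertibility of $[g]$ forces $H^0(g)$ to be an equivalence and each $H^n(\Hom_\cC(X,Y)) \to H^n(\Hom_\cB(gX,gY))$ to be an isomorphism, i.e. $g$ is a quasi-equivalence.

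\textbf{Main obstacle.} The delicate point is the converse direction of the ``moreover'' — knowing that a map inverted in the localization $\Hqe$ is genuinely a quasi-equivalence. This is the saturation property of model categories; it is standard but worth stating carefully, since naively one only knows that the functors $H^0$ and $H^\bullet\Hom$ descend to $\Hqe$, and one needs that this is enough to detect quasi-equivalences. I expect to handle this by the argument in the previous paragraph: descending the relevant cohomology functors and using that isomorphisms of DG categories in $\Hqe$ induce isomorphisms on all these invariants. The roof-construction part is routine model-category bookkeeping and, given the fibrancy of all objects noted in the excerpt, is quite short.
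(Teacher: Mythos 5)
Your proposal is correct and follows essentially the same route as the paper: represent the quasi-functor by an honest DG functor out of a cofibrant replacement of $\cA$, using that every DG category is fibrant so the fibrant replacement of $\cB$ can be taken to be $\cB$ itself, and deduce the ``moreover'' statement from the saturation property of model categories (\cite[Theorem 1.2.10 (ii) and (iv)]{Hov99}). The only remark is that your secondary ``concrete'' argument via descending $H^0$ and $H^n(\Hom(-,-))$ to $\Hqe$ is looser than the direct appeal to saturation (which is all the paper uses), but it is not load-bearing.
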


\begin{proof}	By the fundamental theorem of model categories \cite[Theorem 1.2.10 (ii)]{Hov99}, we can represent a quasi-functor from $\cA$ to $\cB$ by a DG functor $\cC \stackrel{g}{\longrightarrow} \cD$, where $\cC \stackrel{f}{\longrightarrow} \cA$ is a cofibrant replacement of $\cA$, and $\cB \stackrel{h}{\longrightarrow} \cD$ is a fibrant replacement of $\cB$. Since $\cB$ itself is a fibrant object, we can choose $\cD = \cB$ and $h$ the identity functor. Hence we get \eqref{eqn:factor}. The other statement follows from \cite[Theorem 1.2.10 (iv)]{Hov99}.\end{proof}

For any DG category $\cA$, it was constructed in \cite{BK91} the \emph{pre-triangulated hull} $\cA^{\text{pre-tr}}$ of $\cA$ by formally adding to $\cA$ all shifts, all cones of morphisms, and cones of morphisms between cones, etc. There is a canonical embedding of DG categories $\cA \hookrightarrow \cA^{\text{pre-tr}}$.

\begin{definition}
\label{def:pretri}
	A DG category $\cA$ is said to be \emph{pre-triangulated} if for every object $A \in \cA$ and $n \in \ZZ$, the object $A[n] \in \cA^{\text{pre-tr}}$ is homotopy equivalent to an object in $\cA$, and for every closed morphism $f$ in $\cA$ of degree $0$, the object $\cone(f) \in \cA^{\text{pre-tr}}$ is homotopy equivalent to an object in $\cA$. 
\end{definition}

\begin{remark}
	In other words, a DG category $\cA$ is pre-triangulated if and only if the DG functor $\cA \hookrightarrow \cA^{\text{pre-tr}}$ is a quasi-equivalence; equivalently, the embedding of the homotopy categories $H^0(\cA) \into H^0(\cA^{\textrm{pre-tr}})$ is an equivalence. In such a case, $H^0(\cA)$ is naturally a triangulated category.
\end{remark}

\begin{definition}
A \emph{DG enhancement} of a triangulated category $\cT$ is a pair $(\cB, e)$, where $\cB$ is a pre-triangulated DG category and $e: H^0(\cB) \to \cT$ is an equivalence of triangulated categories.
\end{definition}

\begin{definition}
\label{def:unienh}
We say a triangulated category $\cT$ has a \emph{unique DG enhancement} if, given two DG enhancements $(\cB, e)$ and $(\cB', e')$ of $\cT$, there exists a quasi-functor $\cF: \cB \to \cB'$ such that $H^0(\cF): H^0(\cB) \to H^0(\cB')$ is an equivalence of triangulated categories. We say $\cT$ has a \emph{strongly unique DG enhancement} if moreover $\cF$ can be chosen so that the functors $e$ and $e' \circ H^0(\cF)$ are isomorphic.
\end{definition}

\subsection{Preservation of formality}\label{sec:unique}

We explain now why the uniqueness of DG enhancement of a triangulated category  helps with formality problems. The key is the following result. Althought it might be known to experts, we nevertheless supply a proof since we do not know of any in the literature.

\begin{proposition}
	\label{prop:RHom}
	Let $(\cB_1, e_1)$ and $(\cB_2, e_2)$ be DG enhancements of a triangulated category $\cT$. Let $T \in \Ob(\cT)$, $B_1 \in \Ob(\cB_1)$, $B_2 \in \Ob(\cB_2)$, such that $T$, $e_1(B_1)$ and $e_2(B_2)$ are isomorphic in $\cT$. Assume that $\cT$ has a strongly unique DG enhancement, then $\Hom_{\cB_1}(B_1,B_1)$ and $\Hom_{\cB_2}(B_2,B_2)$ are quasi-isomorphic DG algebras.
\end{proposition}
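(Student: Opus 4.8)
The plan is to transport the endomorphism DG algebra along a zig-zag of DG functors connecting $\cB_1$ and $\cB_2$, using strong uniqueness to keep track of the relevant objects. First I would use the strong uniqueness hypothesis to produce a quasi-functor $\cF\colon \cB_1 \to \cB_2$ with $H^0(\cF)$ an equivalence of triangulated categories and $e_2\circ H^0(\cF)\cong e_1$. Since $\cB_1$ and $\cB_2$ are pre-triangulated and $H^0(\cF)$ is an equivalence, $\cF$ is an isomorphism in $\Hqe$, so by Lemma \ref{lem:oneroof} it is represented by a roof $\cB_1 \stackrel{f}{\longleftarrow} \cC \stackrel{g}{\longrightarrow} \cB_2$ in which \emph{both} $f$ and $g$ are quasi-equivalences; moreover, via $H^0(\cF)\cong H^0(g)\circ H^0(f)^{-1}$, the relation $e_2\circ H^0(\cF)\cong e_1$ unwinds to $e_2\circ H^0(g)\cong e_1\circ H^0(f)$.

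Because $H^0(f)$ is essentially surjective I can choose $C\in\Ob(\cC)$ with $f(C)\cong B_1$ in $H^0(\cB_1)$. Then the displayed compatibility gives $e_2(g(C))\cong e_1(f(C))\cong e_1(B_1)\cong T\cong e_2(B_2)$, and since $e_2$ is an equivalence this forces $g(C)\cong B_2$ in $H^0(\cB_2)$. As $f$ and $g$ are quasi-equivalences, the relevant components of these DG functors are quasi-isomorphisms of DG algebras, yielding $\Hom_{\cB_1}(f(C),f(C)) \xleftarrow{\ \simeq\ } \Hom_\cC(C,C) \xrightarrow{\ \simeq\ } \Hom_{\cB_2}(g(C),g(C))$. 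It then remains to replace $f(C)$ by $B_1$ and $g(C)$ by $B_2$; that is, to know that homotopy-equivalent objects of a DG category have quasi-isomorphic endomorphism DG algebras.

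For this last point I would argue directly from Lemma \ref{lem:oneroof}. If $X\cong Y$ in $H^0(\cB)$, let $\cD\subseteq\cB$ be the full DG subcategory on $\{X,Y\}$ and let $\cD_X,\cD_Y\subseteq\cD$ be the (one-object) full DG subcategories on $\{X\}$ and $\{Y\}$ respectively. The inclusions $\cD_X\hookrightarrow\cD$ and $\cD_Y\hookrightarrow\cD$ are quasi-equivalences: being full embeddings they are identities on the relevant $\Hom$-complexes, and they are essentially surjective on $H^0$ precisely because $X\cong Y$. Hence $\cD_X$ and $\cD_Y$ are quasi-equivalent one-object DG categories; applying Lemma \ref{lem:oneroof} to a quasi-functor realizing this equivalence yields a roof $\cD_X\stackrel{p}{\longleftarrow}\cE\stackrel{q}{\longrightarrow}\cD_Y$ with $p$ and $q$ quasi-equivalences. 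Choosing any $E\in\Ob(\cE)$ — the set is nonempty since $H^0(\cE)\simeq H^0(\cD_X)$ — we necessarily have $p(E)=X$ and $q(E)=Y$, the targets having a single object, so $p$ and $q$ induce the desired DG algebra quasi-isomorphisms $\Hom_\cB(X,X) \xleftarrow{\ \simeq\ } \Hom_\cE(E,E) \xrightarrow{\ \simeq\ } \Hom_\cB(Y,Y)$. Concatenating everything produces a zig-zag of DG algebra quasi-isomorphisms from $\Hom_{\cB_1}(B_1,B_1)$ to $\Hom_{\cB_2}(B_2,B_2)$.

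The main obstacle is not one hard computation but two points requiring care. The first is that a quasi-functor is merely a morphism in $\Hqe$, so one cannot speak of its effect on $\Hom$-complexes until it has been resolved into an honest roof of DG functors; here one must also verify that strong uniqueness genuinely upgrades the equivalence $H^0(\cF)$ to a quasi-equivalence, so that the second leg $g$ of the roof — not only the first leg $f$ — is a quasi-equivalence. The second is the passage between homotopy-equivalent objects: the naive conjugation map $\varphi\mapsto\alpha\varphi\beta$ built from a homotopy equivalence $\alpha\colon X\to Y$, $\beta\colon Y\to X$ is only multiplicative up to homotopy, which is exactly why the argument above detours through the one-object full subcategories rather than writing down an explicit DG algebra homomorphism.
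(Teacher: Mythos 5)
Your argument is correct, but it diverges from the paper's proof at the one genuinely delicate point, namely how to pass from ``the common preimage maps to something \emph{isomorphic} to $B_i$'' to a statement about the DG algebras $\Hom_{\cB_i}(B_i,B_i)$ themselves. The paper handles this by pruning: it replaces $\cB_i$ by the full DG subcategory $\cC_i$ obtained by deleting every object of $e_i^{-1}(\text{iso class of }T)$ other than $B_i$, checks that $\cC_i$ is still a pre-triangulated enhancement of $\cT$, and then essential surjectivity of the roof legs forces the preimage to hit $B_i$ \emph{on the nose}, so a single application of Lemma \ref{lem:oneroof} finishes the proof. You instead keep the full enhancements, accept $f(C)\cong B_1$ and $g(C)\cong B_2$ only up to homotopy equivalence, and supply a separate lemma that homotopy-equivalent objects of a DG category have quasi-isomorphic endomorphism DG algebras, proved by comparing the one-object full subcategories $\cD_X,\cD_Y$ inside the two-object full subcategory $\cD$ and invoking Lemma \ref{lem:oneroof} a second time; this is correct, and you rightly avoid the trap of conjugating by a homotopy equivalence, which is not a map of DG algebras. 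The trade-off: the paper's pruning makes the object-matching trivial at the cost of verifying that $\cC_i$ remains a pre-triangulated enhancement, while your version is more modular and the auxiliary lemma is reusable, at the cost of a second roof. One remark applying equally to both proofs: deducing that the quasi-functor supplied by (strong) uniqueness is an \emph{isomorphism} in $\Hqe$ (so that the second leg of the roof is also a quasi-equivalence) uses that for pre-triangulated enhancements full faithfulness of $H^0(\cF)$ propagates to all $H^n$ of the $\Hom$-complexes via $H^n\Hom(A,A')\cong\Hom_{H^0}(A,A'[n])$; you flag this explicitly, whereas the paper leaves it implicit, so it is not a gap relative to the paper's own standard, but it would be worth writing out.
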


\begin{proof}
	For $i=1$ and $2$, we construct a full subcategory $\cC_i$ of $\cB_i$, whose objects are given by
	$$ \Ob(\cC_i) = \Ob(\cB_i) \backslash \{ B \in \Ob(\cB_i) \mid e_i(B) \cong T \text{ in } \cT, B \neq B_i \}. $$
	Clearly $\cC_i$ is also a DG category. We claim it is pre-triangulated. Indeed, let $\cB_i^{\text{pre-tr}}$ and $\cC_i^{\text{pre-tr}}$ be the pre-triangulated hulls of $\cB_i$ and $\cC_i$ respectively. Then all functors in the commutative diagram
	\begin{equation*}
		\xymatrix{
		H^0(\cB_i) \ar@{^{(}->}[r] & H^0(\cB_i^{\text{pre-tr}}) \\
		H^0(\cC_i) \ar@{^{(}->}[r] \ar@{^{(}->}[u] & H^0(\cC_i^{\text{pre-tr}}) \ar@{^{(}->}[u]
		}
	\end{equation*}
	are fully faithful. By the assumption that $\cB_i$ is pre-triangulated, the upper horizontal arrow is an equivalence. By the construction of $\cC_i$, the left vertical arrow is also an equivalence. In particular, they are essentially surjective. Hence the bottom horizontal arrow must be essentially surjective, hence an equivalence, which proves that $\cC_i$ is a pre-triangulated DG category. Moreover, since the composition
	$$ H^0(\cC_i) \into H^0(\cB_i) \stackrel{e_i}{\rightarrow} \cT $$
	is an equivalence of categories, we conclude that $\cC_i$ is a DG enhancement of $\cT$.
	
	By assumption, $\cT$ has a strongly unique DG enhancement. Therefore by Lemma \ref{lem:oneroof}, there exists some DG category $\cC_0$, such that both functors $f_1$ and $f_2$ in the roof
	$$ \cC_1 \stackrel{f_1}{\longleftarrow} \cC_0 \stackrel{f_2}{\longrightarrow} \cC_2 $$
	are quasi-equivalences. In particular, all functors in the diagram
	\begin{equation*}
		\xymatrix{
		H^0(\cC_0) \ar[r]^{H^0(f_2)} \ar[d]_{H^0(f_1)} & H^0(\cC_2) \ar[d] \\
		H^0(\cC_1) \ar[r] & \cT
		}
	\end{equation*}
	are equivalences of categories, and the diagram is $2$-commutative (the two compositions from $H^0(\cC_0)$ to $\cT$ are isomorphic functors).

	By the essential surjectivity of $f_1$, there exists some $B_0 \in \Ob(\cC_0)$, such that $f_1(B_0) \cong B_1$ in $H^0(\cC_1)$. By the construction of $\cC_1$, we know that $B_1$ is the only object in its isomorphism class of objects in $H^0(\cC_1)$, hence $f_1(B_0)=B_1$. By the $2$-commutativity of the diagram, the images of $f_2(B_0)$ and $B_2$ are both isomorphic to $T$ in $\cT$, hence $f_2(B_0)$ and $B_2$ themselves are in the same isomorphism class of objects in $H^0(\cC_2)$, which implies $f_2(B_0)=B_2$ by the construction of the category $\cC_2$.
	
	Since $f_1$ and $f_2$ are quasi-equivalences, the morphism
	\begin{equation*}
		f_i(B_0,B_0): \Hom_{\cC_0}(B_0, B_0) \lra \Hom_{\cC_i}(B_i, B_i)
	\end{equation*}
	is a quasi-isomorphism of DG algebras for $i=1$ and $2$. Since $\cC_i$ is a full subcategory of $\cB_i$ for $i=1$ and $2$, we conclude that $\Hom_{\cB_1}(B_1, B_1)$ and $\Hom_{\cB_2}(B_2, B_2)$ are quasi-isomorphic DG algebras.
	\end{proof}

\begin{remark}
	\label{rmk:RHom}
	Under the assumption of the above proposition, we can associate canonically to any $T \in \Ob(\cT)$ a DG algebra $\Hom_{\cB_1}(B_1,B_1)$ (for any lift $B_1$ of $T$ in any DG enhancement $\cB_1$ of $\cT$), which is well-defined up to quasi-isomorphisms. For convenience, we will denote this (quasi-isomorphism class of) DG algebra by $\RHom^\hdot(T,T)$.
\end{remark}

The following alternative formulation of the proposition is useful:

\begin{corollary}
	\label{cor:qisom}
	Let $\Phi: \cT_1 \to \cT_2$ be an equivalence of triangulated categories. Assume that $\cT_2$ (hence $\cT_1$) has a strongly unique DG enhancement. Then for any object $T \in \Ob(\cT_1)$, the DG algebras $\RHom^\hdot(T,T)$ and $\RHom^\hdot(\Phi(T), \Phi(T))$ are quasi-isomorphic. In particular, $\RHom(T,T)$ is formal if and only if $\RHom(\Phi(T),\Phi(T))$ is formal. \qed
\end{corollary}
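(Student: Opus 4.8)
The plan is to deduce the corollary directly from Proposition \ref{prop:RHom}; the only additional work is a bookkeeping argument transporting DG enhancements across $\Phi$, plus the routine observation that formality is a quasi-isomorphism invariant.

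First I would verify the parenthetical claim that $\cT_1$ also has a strongly unique DG enhancement. If $(\cB, e)$ and $(\cB', e')$ are DG enhancements of $\cT_1$, then $(\cB, \Phi \circ e)$ and $(\cB', \Phi \circ e')$ are DG enhancements of $\cT_2$, since $\Phi$ is a triangulated equivalence. Strong uniqueness for $\cT_2$ yields a quasi-functor $\cF \colon \cB \to \cB'$ with $H^0(\cF)$ an equivalence of triangulated categories and $\Phi \circ e \cong \Phi \circ e' \circ H^0(\cF)$; composing with a fixed quasi-inverse $\Psi$ of $\Phi$ gives $e \cong e' \circ H^0(\cF)$. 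Hence $\cT_1$ has a strongly unique DG enhancement, and in particular the symbol $\RHom^\hdot(-,-)$ of Remark \ref{rmk:RHom} is well-defined on objects of $\cT_1$.

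Next, fix DG enhancements $(\cB_1, e_1)$ of $\cT_1$ and $(\cB_2, e_2)$ of $\cT_2$. Then $(\cB_2, \Psi \circ e_2)$ is a DG enhancement of $\cT_1$. Given $T \in \Ob(\cT_1)$, choose $B_1 \in \Ob(\cB_1)$ with $e_1(B_1) \cong T$ and $B_2 \in \Ob(\cB_2)$ with $e_2(B_2) \cong \Phi(T)$; the latter is equivalent to $(\Psi \circ e_2)(B_2) \cong T$. Thus $T$, $e_1(B_1)$ and $(\Psi \circ e_2)(B_2)$ are isomorphic in $\cT_1$, and Proposition \ref{prop:RHom}, applied to the triangulated category $\cT_1$ with the two enhancements $(\cB_1, e_1)$ and $(\cB_2, \Psi \circ e_2)$, shows that $\Hom_{\cB_1}(B_1, B_1)$ and $\Hom_{\cB_2}(B_2, B_2)$ are quasi-isomorphic DG algebras. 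By Remark \ref{rmk:RHom}, the first represents $\RHom^\hdot(T,T)$ (computed via the enhancement $(\cB_1, e_1)$ of $\cT_1$), while the second represents $\RHom^\hdot(\Phi(T), \Phi(T))$ (computed via $(\cB_2, e_2)$ of $\cT_2$, using $e_2(B_2) \cong \Phi(T)$). This is the asserted quasi-isomorphism. The one point requiring attention — and essentially the whole idea — is that a single object $B_2 \in \Ob(\cB_2)$ simultaneously lifts $T$ for the enhancement $(\cB_2, \Psi \circ e_2)$ of $\cT_1$ and lifts $\Phi(T)$ for the enhancement $(\cB_2, e_2)$ of $\cT_2$, so that its endomorphism DG algebra computes both $\RHom^\hdot$'s at once.

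Finally, the formality statement is immediate: quasi-isomorphic DG algebras have isomorphic cohomology algebras, and a DG algebra is formal precisely when it lies in the same zig-zag class (under quasi-isomorphisms) as its cohomology algebra equipped with the zero differential — a condition depending only on that equivalence class. Hence $\RHom^\hdot(T,T)$ is formal if and only if $\RHom^\hdot(\Phi(T), \Phi(T))$ is. I do not expect a genuine obstacle here: all the substance has been packaged into Proposition \ref{prop:RHom}, and what remains is purely formal.
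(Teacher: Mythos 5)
Your proposal is correct and is exactly the argument the paper intends (the corollary is stated with \qed precisely because it follows from Proposition \ref{prop:RHom} by viewing $(\cB_2,\Psi\circ e_2)$ as a second enhancement of $\cT_1$, which is the step you carry out). The one point you flag as needing attention --- that a single $B_2$ lifts $T$ for $(\cB_2,\Psi\circ e_2)$ and $\Phi(T)$ for $(\cB_2,e_2)$ --- is indeed the whole content, and you handle it correctly.
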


\noindent{\bf Proof of Proposition \ref{prop:intro}.}
	It follows immediately from Corollary \ref{cor:qisom} and \cite[Theorem 2.14]{LO10} which states that $D^b(\Coh(X))$ has a strongly unique DG enhancement for a smooth projective variety $X$. \qed

\section{Formality on K3 Surfaces}\label{sec:geometry}

\subsection{Formality for coherent sheaves}\label{sec:moduli}
From now,  $(X,H)$ is a complex projective K3 surface and $\kk = \CC$. Let $F$ be a coherent sheaf on $X$. The \emph{Mukai vector} of $F$ is
$$ \vv = \ch(F) \cdot \sqrt{\td(X)}\in H^0(X, \ZZ) \oplus \NS(X) \oplus H^4(X, \ZZ) = H^\ast_{alg}(X, \ZZ). $$
If we write $\vv = (\vv_0, \vv_1, \vv_2)$, then the \emph{dual} of $\vv$ is defined by
$ \vv^\vee = (\vv_0, -\vv_1, \vv_2). $
The \emph{Mukai pairing} on $H^\ast_{alg}(X, \ZZ)$  is defined by
$$ \vv \cdot \ww = -\vv_0\ww_2 + \vv_1\ww_1 - \vv_2\ww_0, $$
where the products on the right hand side are Poincar\'e pairings.

Recall from \cite{HL10} that for a coherent sheaf one has the notions of {\it $H$-(semi)stability} (Gieseker) and {\it $\mu_H$-(semi)stability} (slope). An $H$-semistable sheaf $F$ is $H$-\emph{polystable} if it can be written in the form of
\begin{equation}
	\label{eqn:sum}
	F = F_1^{\oplus n_1} \oplus \cdots \oplus F_k^{\oplus n_k}
\end{equation}
where $F_1, \cdots, F_k$ are pairwise non-isomorphic $H$-stable summands, and $n_1, \cdots, n_k$ are strictly positive integers.

The moduli space of $H$-semistable coherent sheaves on $X$ of Mukai vector $\vv$ is denoted by $\cM_{X,H}(\vv)$. The closed points of $\cM_{X,H}(\vv)$ are into one-to-one correspondence with the $H$-polystable sheaves, and with the $S$-equivalence classes of semistable sheaves.

The following  result generalized a special case proved in \cite[Proposition 3.1]{KL07}:

\begin{proposition}[{\cite[Proposition 1.3]{Zha12}}]
\label{prop:known}
Let $(X,H)$ be a  projective K3 surface, and $\vv$ a Mukai vector of positive rank. Assume $H$ is generic with respect to $\vv$, and there is at least one $\mu_H$-stable sheaf of Mukai vector $\vv$. Let $F$ be a $H$-polystable sheaf with a decomposition given by \eqref{eqn:sum}. Assume either
\begin{itemize}
\item[(i)] $\rk F_i=1$ for all $i=1, \cdots, k$; or
\item[(ii)] $\rk F_i\geq 2$ for all $i=1, \cdots, k$.
\end{itemize}
Then the DG algebra $\RHom^\hdot(F,F)$ is formal.
\end{proposition}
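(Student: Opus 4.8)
The plan is to follow, and extend, the strategy of Kaledin--Lehn \cite{KL07}: reduce the formality of $\RHom^\hdot(F,F)$ to an identification of the local analytic structure of the moduli space $\cM_{X,H}(\vv)$ near $[F]$ with a quiver variety. \textbf{Algebraic set-up.} Because $X$ is a K3 surface, Serre duality equips $A:=\RHom^\hdot(F,F)$ with a non-degenerate cyclic pairing $\Ext^i(F,F)\otimes\Ext^{2-i}(F,F)\to\Ext^2(F,F)\xrightarrow{\ \tr\ }\CC$ of degree $-2$, so that $A$ is a proper, homotopy Calabi--Yau DG algebra of dimension $2$. A minimal $A_\infty$-model of such an algebra is encoded by a formal superpotential $W=W_2+W_3+\cdots$ on $\Ext^1(F,F)$, invariant under $\Aut(F)=\prod_i\GL_{n_i}(\CC)$ --- this equality of $\Aut(F)$ uses only that the $F_i$ are pairwise non-isomorphic $H$-stable sheaves with equal reduced Hilbert polynomial --- where the quadratic part $W_2$ is the Serre pairing on $\Ext^1(F,F)$ and the lowest term of the relations $\partial W$ is the moment map $\mu\colon\Ext^1(F,F)\to\Ext^2(F,F)\cong\operatorname{Lie}(\Aut F)^\vee$ for the $\Aut(F)$-action. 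By the usual dictionary, $A$ is formal precisely when $W$ is equivalent, under a formal $\Aut(F)$-equivariant change of coordinates, to $W_2+W_3$; equivalently, when the Kuranishi obstruction map $\kappa\colon\Ext^1(F,F)\to\Ext^2(F,F)$ coincides, after a formal gauge transformation, with $\mu$.

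\textbf{Geometric translation.} Deformation theory identifies the formal germ of $\cM_{X,H}(\vv)$ at $[F]$ with the GIT quotient $\kappa^{-1}(0)/\!\!/\Aut(F)$; replacing $\kappa$ by $\mu$ yields exactly the Nakajima-type quiver variety $\mathfrak M$ attached to the $\Ext$-quiver of $F$ with dimension vector $(n_i)$, a conical symplectic singularity whose polystable point has, by construction, a \emph{formal} derived endomorphism algebra (its defining superpotential is literally cubic, as the moment-map relations are quadratic). It therefore suffices to show that $(\cM_{X,H}(\vv),[F])$ is formally isomorphic to $(\mathfrak M,0)$. I would extract this from three inputs: (a) Mukai's theorem that $\cM_{X,H}(\vv)$ is holomorphic-symplectic along its stable locus, with symplectic form induced by the pairing above; (b) under the hypotheses --- $H$ generic with respect to $\vv$ and the existence of a $\mu_H$-stable sheaf of Mukai vector $\vv$ --- the structure theory of O'Grady and Yoshioka, which makes $\cM_{X,H}(\vv)$ irreducible and normal of the expected dimension $\vv\cdot\vv+2$ with strictly semistable locus of large codimension; and (c) the parallel facts (irreducibility, normality, expected dimension) for $\mathfrak M$, valid because the hypotheses force the $\Ext$-quiver together with $(n_i)$ to be non-rigid --- each summand satisfies $\vv_i\cdot\vv_i\geq-2$, so no vertex is forced to be loopless. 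Granting (a)--(c), both germs are normal and symplectic of equal dimension and carry the formal $\CC^\ast$-action rescaling the deformation parameter; a weight-by-weight comparison then forces every term of $\kappa$ beyond $\mu$ to be gauge-trivial, so $\kappa\sim\mu$ and $A$ is formal.

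\textbf{The case split and the main obstacle.} The dichotomy (i)/(ii) is dictated by where inputs (b)--(c) and the gauging argument can actually be carried out. In case (i), twisting by a line bundle so that one rank-one summand becomes an ideal sheaf presents $\mathfrak M$ as a product of symmetric powers of the spaces treated in \cite{KL07}, so their local analysis transfers directly. In case (ii), all summands have rank $\geq2$, and the irreducibility and normality of $\cM_{X,H}(\vv)$ needed in (b) come instead from Yoshioka's results on moduli of higher-rank sheaves, while the absence of any rank-one summand is what lets the comparison close. I expect step (c), together with the gauging argument --- that is, controlling the full, a priori non-quadratic, Kuranishi map, or equivalently ruling out that $A$ is one of the non-formal Calabi--Yau-$2$ DG algebras exhibited for split generators in \cite{LU18} --- to be the essential difficulty; what makes it tractable is precisely the normal, symplectic, expected-dimensional geometry of the K3 moduli space under the stated hypotheses.
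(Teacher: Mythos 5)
Your proposal does not reconstruct the argument this statement actually rests on, and its central step has a genuine gap. A point of orientation first: the paper does not prove Proposition \ref{prop:known} --- it is imported from \cite[Proposition 1.3]{Zha12}, whose proof follows the method of Kaledin--Lehn \cite{KL07}. That proof is purely algebraic exactly where yours is geometric: its two tools are the $\GL$-invariance of formality (formality of $\RHom^\hdot(F,F)$ for $F=\bigoplus_i F_i^{\oplus n_i}$ depends only on the summands $F_i$, not on the multiplicities $n_i$) and Kaledin's theorem that formality is preserved under specialization in flat families of DG algebras. The hypotheses that $H$ is generic and that a $\mu_H$-stable sheaf of class $\vv$ exists are what allow one to deform a suitably chosen representative to a $\mu_H$-stable sheaf for which formality is established first --- this is also where the rank dichotomy (i)/(ii) enters, cf.\ Remark \ref{rmk:nomust}. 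The moduli space appears only as the base of that family, never through its singularity structure.

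The gap in your argument is the concluding step: ``both germs are normal and symplectic of equal dimension with compatible $\CC^\ast$-actions; a weight-by-weight comparison forces $\kappa\sim\mu$, so $A$ is formal.'' The implication you need --- from the local geometry of $\cM_{X,H}(\vv)$ at $[F]$ back to formality of $\RHom^\hdot(F,F)$ --- runs in the wrong direction. The germ of the moduli space remembers only the ideal generated by the components of the $\Aut(F)$-equivariant Kuranishi map $\kappa$, not $\kappa$ itself as a map, and a fortiori not the individual higher products $m_k\colon\Ext^1(F,F)^{\otimes k}\to\Ext^2(F,F)$ whose vanishing up to $A_\infty$-gauge is what formality means; non-gauge-equivalent $A_\infty$-structures can cut out isomorphic germs. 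This is precisely why the formality conjecture is strictly stronger than quadraticity of Kuranishi spaces: the introduction of this paper records that formality \emph{implies} the local results of \cite{Yos16,AS18}, not conversely. Moreover, the identification of the germ with a Nakajima quiver variety, which you take as input, is itself the main theorem of \cite{AS18}, proved there by substantial independent means and postdating \cite{Zha12}; it does not follow from ``both sides are normal symplectic of the same dimension with a $\CC^\ast$-action,'' since such germs need not be isomorphic. So the argument is both circular relative to the literature and incomplete on its own terms. A smaller but real issue: the CY$_2$ formalism is off --- there is no superpotential $W$ on $\Ext^1(F,F)$ with $\partial W=\kappa$ in the $2$-Calabi--Yau case (that is the CY$_3$ picture); the correct structure is $\kappa\colon\Ext^1(F,F)\to\Ext^2(F,F)\cong\End(F)^{\vee}$ with quadratic part the moment map, and even ``$\kappa\sim\mu$'' would only control the symmetrizations of the $m_k$, not the $m_k$ themselves.
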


\begin{remark}
	\label{rmk:nongen}
	By \cite[Remark 3.4 (2)]{AS18}, that the assumption of $H$ being generic with respect to $\vv$ is not necessary for the case (ii). However, case (i) does require it.
\end{remark}

\begin{remark}
	\label{rmk:nomust}
	As explained in \cite[\S 2]{Zha12}, instead of requiring the existence of a $\mu_H$-stable sheaf of Mukai vector $\vv$, it suffices to require that each stable summand $F_i$ has a $\mu_H$-stable deformation in its own moduli.
\end{remark}

Define the integral functor 
\begin{align}
	\Phi: D^b(\Coh(X))\longrightarrow\ D^b(\Coh(X)) \label{eqn:FMtrans}\\
	F\longmapsto\ \mathbf{R}q_*(p^*F \stackrel{\mathbf{L}}{\otimes} \cI_\Delta), \notag
\end{align}
where $p$ and $q$ are the first and the second projection from $X\times X$, and $\cI_\Delta$ is the ideal sheaf of the diagonal embedding $X \hookrightarrow X \times X$. It is an autoequivalence, \cite[Examples 10.9]{Huy06}. 
The following result of Yoshioka is crucial:

\begin{theorem}[{\cite[Proposition 1.5, Theorem 1.7]{Yos09}}]
	\label{thm:preserve}
	Let $(X,H)$ be a projective K3 surface. Then for any $F \in D^b(\Coh(X))$ with Mukai vector $$ \vv = (r, dH+D, a) $$ for some $r,a \in \ZZ$, $d \in \QQ$ and $D \in \NS(X)_\QQ \cap H^\perp$, the Mukai vector of $\Phi(F)$ can be given by $$ \widehat{\vv} = (a, -(dH+\widehat{D}), r) $$ for some $\widehat{D} \in \NS(X)_\QQ \cap H^\perp$. Moreover, in either of the two following cases:
	\begin{enumerate}
		\item $r > 0$, $a > 0$, and $ d > \max \left\{ 4r^2 + {1}/{H^2},\ 2r(\vv^2-D^2) \right\}, $
		\item $r = 0$, and $ a > \max \left\{ 3,\ (\vv^2-D^2)/2 + 1 \right\}, $
	\end{enumerate}
	 $\Phi$  induces an isomorphism $\cM_{X,H}(\vv) \cong \cM_{X,H}(\widehat{\vv})$ preserving $H$-polystability, such that, for every $H$-polystable sheaf $F$, $\Phi(F_i)$ is $\mu_H$-stable for each $H$-stable summand $F_i$ of $F$.
\end{theorem}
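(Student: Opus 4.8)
The plan is to split the statement into the purely cohomological computation of $\widehat\vv$ and the geometric assertion about moduli spaces; the former is a one-line Riemann--Roch computation, whereas the latter --- which is where the explicit numerical bounds get consumed --- is the substance, and is what is carried out in \cite{Yos09}. For the Mukai vector I would start from the tautological short exact sequence $0 \to \cI_\Delta \to \cO_{X\times X} \to \cO_\Delta \to 0$ on $X\times X$. Applying the exact functor $\mathbf{R}q_*(p^*F\stackrel{\mathbf{L}}{\otimes}-)$ to it, and using $\mathbf{R}q_*(p^*F) = \mathbf{R}\Gamma(X,F)\otimes\cO_X$ together with $\mathbf{R}q_*(p^*F\stackrel{\mathbf{L}}{\otimes}\cO_\Delta) = F$, gives a distinguished triangle
\begin{equation*}
	\Phi(F) \lra \mathbf{R}\Gamma(X,F)\otimes\cO_X \lra F \lra \Phi(F)[1],
\end{equation*}
so that $\Phi$ is, up to a shift, the spherical twist along $\cO_X$. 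Passing to Mukai vectors in this triangle (which is additive on $K$-theory) and using $\chi(X,F) = \vv_0 + \vv_2 = r + a$ on the K3 surface $X$ yields
\begin{equation*}
	v(\Phi(F)) = \chi(X,F)\, v(\cO_X) - v(F) = (r+a)(1,0,1) - (r,\, dH+D,\, a) = (a,\, -(dH+D),\, r),
\end{equation*}
which is the asserted $\widehat\vv$, with $\widehat D = D \in \NS(X)_\QQ\cap H^\perp$. This step uses none of the numerical hypotheses.

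Next I would extract the cohomology sheaves of the triangle above: one finds $\mathcal H^{<0}(\Phi F) = 0$, $\mathcal H^2(\Phi F) = H^2(X,F)\otimes\cO_X$, an extension $0 \to \cok\!\big(H^0(X,F)\otimes\cO_X \to F\big) \to \mathcal H^1(\Phi F) \to H^1(X,F)\otimes\cO_X \to 0$, and $\mathcal H^0(\Phi F) = \ker\!\big(H^0(X,F)\otimes\cO_X \to F\big)$. Hence $\Phi(F)$ is a genuine sheaf, concentrated in cohomological degree $0$ and equal to the kernel sheaf $M_F := \ker(H^0(X,F)\otimes\cO_X \to F)$, precisely when $H^1(X,F) = H^2(X,F) = 0$ and $F$ is globally generated. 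The vanishing $H^2(X,F) = 0$ is immediate from Serre duality together with $H$-semistability (a nonzero map $F\to\cO_X$ contradicts $\mu_H(F) = dH^2/r > 0$ in case (1), and is impossible for the torsion sheaf $F$ in case (2)). The vanishing of $H^1$ and the global generation are precisely the effective Serre-vanishing / Castelnuovo--Mumford-regularity statements that the numerical bounds are engineered to supply, \emph{uniformly} over all $H$-semistable $F$ of the fixed vector $\vv$: the term $4r^2 + 1/H^2$ (resp.\ $a>3$) governs how large the multiple of $H$ must be for the relevant positive class to be sufficiently very ample, while the discriminant term $2r(\vv^2-D^2)$ (resp.\ $(\vv^2-D^2)/2+1$) bounds $c_2$ of $F$ --- equivalently the defect of $F$ from a split sheaf --- forcing $F$ to be $0$-regular once $d$ (resp.\ $a$) crosses the stated threshold. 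One then checks $\rk M_F = h^0(X,F) - r = (r+a)-r = a$, consistent with $\widehat\vv$, and notes that the same regularity analysis applies to the quasi-inverse $\Phi^{-1}$ on $H$-semistable sheaves of vector $\widehat\vv$.

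The heart of the argument --- and the step I expect to be genuinely hard --- is to show that $M_F$ is $H$-stable whenever $F$ is $H$-stable, and in fact $\mu_H$-stable in that case (a strengthening). I would argue by contradiction: a saturated, Gieseker- or slope-destabilizing subsheaf $\widehat G \subsetneq M_F$ produces, after applying $\Phi^{-1}$ and passing to cohomology sheaves, either a subsheaf or a quotient sheaf of $F$ whose Mukai vector is constrained by that of $\widehat G$; the numerical hypotheses bound the rank, the $H$-degree and the discriminant of any such $\widehat G$ tightly enough that $\Phi^{-1}(\widehat G)$ is again a sheaf and the resulting sub- or quotient sheaf of $F$ violates the $H$-stability of $F$. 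Equivalently, since $\Phi$ is an equivalence one has $\Hom(M_F, M_{F'}) \cong \Hom(F, F')$, and similar identities, for sheaves whose transform is again a sheaf, and the bounds preclude the homomorphisms a destabilization would force. Carrying out this destabilizer bookkeeping --- ruling out every numerically permitted destabilizing subobject, uniformly in $F$, and moreover obtaining the stronger slope inequality --- is the technical core of \cite{Yos09}.

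Granting that, $\Phi$ and $\Phi^{-1}$ are mutually inverse bijections between the sets of $H$-polystable sheaves of Mukai vectors $\vv$ and $\widehat\vv$: one passes from $H$-stable to $H$-polystable sheaves by additivity, using that $\Phi$ commutes with direct sums, carries pairwise non-isomorphic stable summands to pairwise non-isomorphic ones, and matches reduced Hilbert polynomials through the fixed correspondence $\vv\leftrightarrow\widehat\vv$. Finally, since the cohomological degree in which the transform lives is locally constant in families (being pinned down by the fixed numerical data), the relative Fourier--Mukai transform carries a (quasi-)universal family over $\cM_{X,H}(\vv)$ to a flat family of $H$-semistable sheaves of vector $\widehat\vv$, hence induces a morphism $\cM_{X,H}(\vv) \to \cM_{X,H}(\widehat\vv)$; by the universal property of the moduli functors this morphism and the one induced symmetrically by $\Phi^{-1}$ are two-sided inverses. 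This gives the asserted isomorphism, together with its compatibility with $H$-polystability and with the $\mu_H$-stability of the images of the $H$-stable summands.
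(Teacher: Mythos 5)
This statement is imported verbatim from Yoshioka's paper --- the authors give no proof of it and simply cite \cite[Proposition 1.5, Theorem 1.7]{Yos09} --- so your proposal can only be measured against that citation, and at that level it is consistent and essentially correct. Your derivation of $\widehat{\vv}$ from the triangle $\Phi(F) \to \mathbf{R}\Gamma(X,F)\otimes\cO_X \to F$ obtained by tensoring $0 \to \cI_\Delta \to \cO_{X\times X} \to \cO_\Delta \to 0$ with $p^*F$ and pushing forward is correct (using $\chi(F)=r+a$ on a K3), and it even recovers the sharper fact $\widehat{D}=D$ that the paper records separately as a remark after the theorem via \cite[Examples 10.9 (ii)]{Huy06}; your identification of $\Phi$ as a shift of the spherical twist along $\cO_X$, and the resulting description of the cohomology sheaves of $\Phi(F)$, are likewise correct, as is the Serre-duality argument for $H^2(X,F)=0$. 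The substantive content --- that the stated numerical bounds force every $H$-semistable $F$ of class $\vv$ to be globally generated with $H^1(X,F)=0$ so that $\Phi(F)$ is the kernel sheaf, and above all that this kernel sheaf is again ($\mu_H$-)stable when $F$ is stable --- is exactly the part you explicitly defer to \cite{Yos09}, so your text is a plausible roadmap of Yoshioka's argument rather than a self-contained proof; since the paper itself proves none of this either, there is no gap relative to what the paper does, but one should be clear that the ``destabilizer bookkeeping'' paragraph is a statement of strategy, not an argument.
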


In fact, by \cite[Examples 10.9, (ii)]{Huy06} one has $D=\widehat{D}$ in the above theorem.

\noindent
{\bf{Proof of Theorem \ref{thm:intro}.}}
	We write $\vv = (r, dH+D, a)$ for the Mukai vector of $F$ as above. Suppose  that $(r, dH+D) \neq (0,0)$. Then $r \geqslant 0$, and $r=0$ would imply $d>0$ since otherwise $dH+D$ is not effective. Then for any positive integer $m$,
	\begin{equation}
		\label{eqn:tensorH}
		\vv \cdot e^{mH} = \left( r, dH+D+rmH, a+dmH^2+\frac{1}{2}rm^2H^2 \right).
	\end{equation}
	Denote the right hand side of \eqref{eqn:tensorH} by $\vv'=(\vv'_0, \vv'_1, \vv'_2)$. For $m \gg 0$, the following two conditions hold:
	\begin{itemize}
		\item[($\dagger$)] The vector $\vv'$ satisfies (either of) the conditions in Theorem \ref{thm:preserve} (depending on whether the rank of $\vv$ is positive or zero); c.f. \cite[Remark 1.4]{Yos09}.
		\item[($\ddagger$)] Either $0 < \vv'_0 < \vv'_2$ or $0 < H \cdot \vv'_1 < \vv'_2$ (depending on whether the rank of $\vv$ is positive or zero).
	\end{itemize}

	Consider the composition of autoequivalences
	\begin{equation*}
		\xymatrix{
		D^b(\Coh(X)) \ar[r]^{- \otimes H^m} & D^b(\Coh(X)) \ar[r]^{\Phi} & D^b(\Coh(X)).
		}
	\end{equation*}
	 For an  $H$-polystable sheaf $F$ with a decomposition \eqref{eqn:sum}, the Mukai vector of $F \otimes H^m$ is $\vv'=\vv \cdot e^{mH}$. The condition $(\dagger)$ guarantees that $\Phi(F \otimes H^m)$ is an $H$-polystable sheaf by Theorem \ref{thm:preserve}, which can be decomposed into stable summands in the form of
	$$ \Phi(F \otimes H^m) = \Phi(F_1 \otimes H^m)^{\oplus n_1} \oplus \cdots \oplus \Phi(F_k \otimes H^m)^{\oplus n_k}. $$
	For each $i$, the condition $(\ddagger)$ guarantees that the last component of the Mukai vector of $F_i \otimes H^m$ is at least $2$, which implies that the rank of $\Phi(F_i \otimes H^m)$ is at least $2$, hence $\Phi(F \otimes H^m)$ satisfies the condition (ii) in Proposition \ref{prop:known}. Moreover each $\Phi(F_i)$ is $\mu_H$-stable. By Proposition \ref{prop:known} and Remarks \ref{rmk:nongen}, \ref{rmk:nomust}, the DG algebra $\RHom^\hdot(\Phi(F \otimes H^m), \Phi(F \otimes H^m))$ is formal, hence $\RHom^\hdot(F,F)$ is also formal by Proposition \ref{prop:intro}.
	
	The case of $(r, dH+D)=(0,0)$ is reduced to Proposition \ref{prop:known} (i) by applying \eqref{eqn:FMtrans}. \qed

\subsection{Formality  for derived objects}\label{sec:derived}
Let $\Stab^\dagger(X)$ be the connected component of the space of stability conditions on $X$ which contains the geometric ones; see \cite[Definition 11.4]{Bri08}. The following was communicated to us by K. Yoshioka and A. Bayer. Since it is well-known to experts, at the advice of the referees we leave out the details. The reader can find however a full proof in an earlier version of this article \cite{BZv3}.

\begin{proposition}
	\label{prop:derived}
	Let $X$ be a  projective K3 surface, $\vv$ a Mukai vector, and $\sigma \in \Stab^\dagger(X)$  generic with respect to $\vv$. Then there exists an autoequivalence
	$$ \Theta: D^b(\Coh(X)) \longrightarrow D^b(\Coh(X)) $$
	which induces an isomorphism $ \cM_{X,\sigma}(\vv) \cong \cM_{X,\omega}(\uu) $ preserving S-equivalence classes, between the moduli space  of $\sigma$-semistable objects of class $\vv$, and the moduli space  of $\omega$-semistable sheaves of class $\uu$ for some generic ample class $\omega$ on $X$. \end{proposition}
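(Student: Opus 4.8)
The plan is to reduce a general Bridgeland stability condition in $\Stab^\dagger(X)$ to a geometric one (equivalently, a large-volume limit governed by Gieseker stability) by acting with autoequivalences of $D^b(\Coh(X))$. First I would recall the structure of $\Stab^\dagger(X)$ established in \cite{Bri08}: it is a covering space over an open subset of the period domain, and the group generated by spherical twists together with the standard autoequivalences (shifts, line bundle twists, tensoring by $\cO_X(nH)$, the Fourier--Mukai transform $\Phi$ of \eqref{eqn:FMtrans}, and pullbacks by automorphisms) acts on it. By \cite{BM14a,BM14b}, the wall-and-chamber structure for a fixed Mukai vector $\vv$ is locally finite, and the chamber decomposition is preserved under these autoequivalences in the obvious equivariant way; a generic $\sigma$ lies in the interior of a chamber.

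The key step is to transport $\sigma$ into the \emph{Gieseker chamber}. Concretely, one knows (see \cite{Bri08}, and the discussion around large-volume limits) that near the boundary of $\Stab^\dagger(X)$ the $\sigma$-(semi)stable objects of class $\vv$ are exactly the $\mu_H$- or $H$-Gieseker (semi)stable sheaves, up to shift, for a suitable ample class. So it suffices to exhibit an element $\Theta$ of the autoequivalence group whose induced action on $\Stab^\dagger(X)$ carries the chamber containing $\sigma$ into this Gieseker region. That such a $\Theta$ exists follows from the fact that the autoequivalence group acts with a fundamental domain that meets the geometric part of $\Stab^\dagger(X)$ — this is implicit in the covering-space description in \cite{Bri08} and is spelled out in the moduli-theoretic language in \cite{BM14a,BM14b,Yos09}. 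Since $\Theta$ is an equivalence of categories, it induces a bijection between $\sigma$-semistable objects of class $\vv$ and $\sigma'$-semistable objects of class $\Theta_*\vv$, compatibly with S-equivalence (short exact sequences in the relevant hearts, hence Jordan--Hölder filtrations, are preserved), where $\sigma' = \Theta \cdot \sigma$ now lies in the Gieseker chamber for $\uu := \Theta_*\vv$ and some generic ample $\omega$. This gives the claimed isomorphism $\cM_{X,\sigma}(\vv) \cong \cM_{X,\omega}(\uu)$ preserving S-equivalence classes.

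The main obstacle is genericity: I must ensure that $\sigma'$ is generic with respect to $\uu$ with respect to the \emph{Gieseker} wall-and-chamber structure (i.e. that $\omega$ can be chosen generic), not merely that $\sigma$ was Bridgeland-generic. This is handled because the set of non-generic $\sigma$ is a locally finite union of real-codimension-one walls, and the ample cone decomposition for Gieseker stability of $\uu$ is likewise a locally finite wall-and-chamber structure; a small perturbation of $\Theta \cdot \sigma$ staying inside its Bridgeland chamber can be arranged to land in a Gieseker chamber, and since $\Theta$ already moved us into the large-volume region, the $\sigma'$-moduli problem coincides with an $\omega$-Gieseker moduli problem for an honest ample class $\omega$, which we then take generic. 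A secondary technical point is checking that S-equivalence is genuinely preserved rather than merely the underlying sets of objects: this follows since $\Theta$ is exact and sends the short exact sequences computing a Jordan--Hölder filtration with respect to $\sigma$ to those computing one with respect to $\sigma'$, and at the Gieseker end these are the usual sheaf-theoretic Jordan--Hölder filtrations. As noted in the excerpt, the full details are available in \cite{BZv3}; here the essential inputs are the description of $\Stab^\dagger(X)$ in \cite{Bri08}, the large-volume-limit identification of Bridgeland-stable objects with Gieseker-stable sheaves, and the autoequivalence-equivariance of wall-crossing from \cite{BM14a,BM14b,Yos09}.
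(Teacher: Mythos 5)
The central step of your argument --- the existence of a single autoequivalence $\Theta$ whose induced action on $\Stab^\dagger(X)$ carries the chamber containing $\sigma$ into the Gieseker chamber --- is precisely the point that needs proof, and the justification you offer does not supply it. The claim that ``the autoequivalence group acts with a fundamental domain that meets the geometric part of $\Stab^\dagger(X)$'' is not implicit in the covering-space description of \cite{Bri08}: it is essentially Bridgeland's conjecture on the simple connectivity of $\Stab^\dagger(X)$ and the transitivity of the deck action of the autoequivalence group, which remains open for general projective K3 surfaces. Moreover, even granting such a statement it would not suffice: the geometric part of $\Stab^\dagger(X)$ contains many chambers for $\vv$ besides the Gieseker chamber, so landing in the geometric part does not place $\Theta\cdot\sigma$ in the Gieseker chamber for $\Theta_*\vv$, and the induced action on the set of chambers for a fixed Mukai vector is not known to be transitive. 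As written, the existence of $\Theta$ is asserted rather than proved.

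The paper avoids any such global statement. For $\vv^2>0$ it simply quotes \cite[Lemma 7.3]{BM14a}, in the strengthened form of \cite[Remark 3.15]{MZ16}. For $\vv^2\leqslant 0$ it first reduces to $\vv$ primitive with positive leading component (using shifts, line-bundle twists and the transform \eqref{eqn:FMtrans}), and then argues wall by wall: one chooses a path in $\Stab^\dagger(X)$ from $\sigma$ to a stability condition $\tau$ in the Gieseker chamber that meets only one wall at a time, and for each wall one constructs an explicit autoequivalence (via \cite[Proposition 6.8]{BM14b} when $\vv^2<0$, resp.\ \cite[Lemma 7.2(a)]{BM14a} combined with the twisted analogue of \cite[Theorem 12.1]{Bri08} from \cite{HMS08} when $\vv^2=0$) identifying the moduli spaces attached to the two adjacent chambers; $\Theta$ is the composition of these local equivalences with the initial reductions. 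Your remarks on choosing $\omega$ generic and on $\Theta$ preserving S-equivalence are reasonable, but without the wall-by-wall construction (or the citation covering $\vv^2>0$) the key existence claim is a genuine gap.
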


\begin{proof}
For $\vv^2>0$, this is essentially \cite[Lemma 7.3]{BM14a}, generalized to the current form by an idea of K. Yoshioka; see \cite[Remark 3.15]{MZ16}.

For $\vv^2 \leqslant 0$, the idea of the proof is due to A. Bayer. By \cite[Lemma 7.1, Lemma 7.2 (b)]{BM14a}, we can assume $\vv$ is primitive. We can also assume the leading component $\vv_0>0$. Indeed, if $\vv_0 < 0$, we can apply the shift functor $[1]$, so that $\vv$ gets replaced by $-\vv$. If $\vv_0 = 0$, after tensoring with a line bundle if necessary, we can assume $\vv_2 \neq 0$. Then one applies \eqref{eqn:FMtrans} to obtain $\widehat{\vv}$ whose leading component is non-zero.

The rest of the proof makes use of the wall-crossing technique. By \cite[\S 9]{Bri08},  $\Stab^\dagger(X)$ admits a wall and chamber structure. There is one chamber which contains $\sigma$ as an interior point, and another ``Gieseker chamber'' in which we can pick a stability condition $\tau$, such that the $\tau$-stability for class $\vv$ is the same as the Gieseker $\beta$-twisted $\omega$-stability for some generic $\beta \in \NS(X)_\QQ$ and $\omega \in \Amp(X)_\QQ$; see \cite[\S 14]{Bri08}. The assumptions $\vv_0 > 0$ and $\omega$ being generic imply further that the $\beta$-twisted $\omega$-stability for class $\vv$ is the same as the untwisted $\omega$-stability by an argument similar to \cite[Lemma 1.1]{Yos01}. We can move $\sigma$ to $\tau$ in $\Stab^\dagger(X)$ along a path that never meets two walls simultaneously. For each wall-crossing, we can construct an explicit  autoequivalence of $D^b(\Coh(X))$ which induces an isomorphism of the moduli spaces of stable objects with respect to generic stability conditions in the neighboring chambers separated by the wall. The idea of its explicit construction in the case of $\vv^2<0$ is essentially contained in \cite[Proposition 6.8]{BM14b}, and in the case of $\vv^2=0$ it is a combination of \cite[Lemma 7.2(a)]{BM14a} and the twisted K3 surface version of \cite[Theorem 12.1]{Bri08} which can be found in \cite[Section 3.1]{HMS08}, see \cite[Remark 6.4]{BM14a}.
\end{proof}

\noindent{\bf{Proof of Theorem \ref{cor:intro}.}}
	By Proposition \ref{prop:derived}, $\Theta(F)$ is an $L$-polystable coherent sheaf on $X$. By Theorem \ref{thm:intro}, the DG algebra $\RHom^\hdot(\Theta(F),\Theta(F))$ is formal, which implies that $\RHom^\hdot(F,F)$ is formal by Proposition \ref{prop:intro}. \qed

\end{document}